\documentclass[10pt]{article}
\usepackage{amssymb}
\usepackage[dvipsnames]{xcolor}

\usepackage{amsmath, amssymb, amsthm, mathtools}
\usepackage{tikz}
\usetikzlibrary{arrows.meta}
\usetikzlibrary{positioning}
\usepackage{caption, subcaption, float}
\usepackage[makeroom]{cancel}
\usepackage[all, knot]{xy}
\usepackage{graphicx}
\usepackage{xcolor}
\usepackage{enumitem}
\DeclareSymbolFont{symbolsC}{U}{txsyc}{m}{n}
\DeclareMathSymbol{\strictif}{\mathrel}{symbolsC}{74}

\usepackage{todonotes}

\newtheorem{theorem}{Theorem}[section]

\newtheorem*{theorem*}{Theorem} 
\newtheorem*{lemma*}{Lemma}

\theoremstyle{definition}
\newtheorem{definition}[theorem]{Definition}

\newtheorem{proposition}[theorem]{Proposition}

\newtheorem{corollary}[theorem]{Corollary}

\theoremstyle{remark}

\theoremstyle{remark}

\newcommand\restr[2]{{
\left.\kern-\nulldelimiterspace 
#1 
\vphantom{\big|} 
\right|_{#2} 
}}

\newcommand{\A}{\mathfrak{A}}

\newcommand{\M}{\mathfrak{M}}

\let\phi\varphi
\title{A note on ultrahomogeneous unary algebras}
\author{Cheng Liao}
\date{}
\begin{document}

\maketitle

\begin{abstract}
In a recent paper \cite{q},  Quinn-Gregson fully classified ultrahomogeneous mono-unary algebras. In particular, he proved that every locally finite mono-unary algebra with finitely many 1-orbits is $\omega$-categorical, and every 1-ultrahomogeneous mono-unary algebra is ultrahomogeneous. He then asked if these two results can be generalized to unary algebras. This short note answers the second question negatively by providing a simple counterexample. We also show that the first question has a positive answer in ``tree-like" cases which covers his result to mono-unary algebras and has a small combinatorial implication.    
\end{abstract}

$$$$

A \textit{mono-unary} algebra is a set with a single unary function. Because of their relatively simple structures, it allows Quinn-Gregson to fully classify ultrahomogeneous mono-unary algebras and ultrahomogeneous $\omega$-categorical mono-unary algebras in his recent paper \cite{q}. In particular, he proved the following results for mono-unary algebras:

\begin{theorem*}[A]\cite[Thm 5.9]{q}
    A countably infinite mono-unary algebra is $\omega$-categorical if and only if it is locally finite and has finitely many 1-orbits.
\end{theorem*}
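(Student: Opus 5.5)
The plan is to go through the Ryll-Nardzewski theorem: a countable structure in a countable language is $\omega$-categorical if and only if its automorphism group is oligomorphic, i.e.\ has finitely many orbits on $n$-tuples for every $n$. Here ``locally finite'' means every finitely generated subalgebra is finite. For a mono-unary algebra $(M,f)$ this says that each forward orbit $\{x,f(x),f^2(x),\dots\}$ is eventually periodic. The ``1-orbits'' are the orbits of $\mathrm{Aut}(M)$ on single elements.

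\emph{Necessity.} Assume $M$ is $\omega$-categorical. Finitely many 1-orbits is immediate from Ryll-Nardzewski with $n=1$. For local finiteness, suppose some $x$ has an infinite forward orbit. Then the formulas $\phi_k(y,z)\equiv f^k(y)=z$ for $k\ge 0$ define pairwise distinct relations on the pairs $(x,f^k(x))$. Hence $(x,f^k(x))$ for $k\in\omega$ realize infinitely many distinct 2-types, which contradicts Ryll-Nardzewski. So every finitely generated subalgebra (a finite union of forward orbits) is finite.

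\emph{Sufficiency.} This is the main work. Assume $M$ is locally finite with finitely many 1-orbits. Then:
\begin{itemize}
\item each connected component contains exactly one cycle;
\item there are only finitely many cycle lengths;
\item the ``height'' of an element, meaning its distance to its cycle, is bounded, since height is $\mathrm{Aut}$-invariant and there are finitely many 1-orbits.
\end{itemize}
So every component is a cycle of bounded length with rooted trees of bounded height hanging off it.

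The key claim is that each node's isomorphism type of the subtree below it is determined by finitely many data. For a node of height $h$, each child subtree has one of finitely many types, and each type occurs with a multiplicity in $\{0,1,2,\dots,\aleph_0\}$. Finitely many 1-orbits should force these multiplicities to lie in a finite set. Then, by induction on height, there are only finitely many subtree types, and so only finitely many component types, each occurring with some multiplicity.

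To bound multiplicities, I will use that the number of components of a given type, and the number of children of a given type below a node, are all counted in $\{0,\dots,\aleph_0\}$. The count need not be bounded a priori: a component type could occur $n$ times for arbitrary finite $n$. What actually matters is that finitely many 1-orbits bounds the number of \emph{distinct} types appearing. Because heights and cycle lengths are bounded, a node type is given by a finite tree description with multiplicities. If infinitely many distinct multiplicity patterns occurred, then nodes with different patterns would lie in different 1-orbits. So only finitely many types occur.

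With that in hand, I will show $\mathrm{Aut}(M)$ is oligomorphic. An $n$-tuple $\bar a$ generates a finite subalgebra $A$ whose size is bounded in terms of $n$, the maximal height and the maximal cycle length. Its orbit is then determined by two things: the isomorphism type of $A$ with the type of each point recorded, and how $A$ sits inside $M$. The latter amounts to the types of the ``remaining'' children at each node of $A$, which are finitely many choices. A back-and-forth argument, building isomorphisms between hanging subtrees of equal type, shows that tuples with the same finite data are automorphic. This gives finitely many $n$-orbits.

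The main obstacle I expect is making precise that the type of a node is a finite invariant which is preserved after removing finitely many designated children, including the case of infinite versus finite multiplicities. I will handle it by induction on height, defining types as tuples of multiplicities over lower-height types.
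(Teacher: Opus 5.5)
Your proof is correct, but for sufficiency it is organized differently from the paper's. The paper proves only the ``if'' direction, as Corollary 3.8, and cites the converse; your necessity argument via the pairs $(x,f^k(x))$ is the standard one.

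\textbf{The paper's route.} It reduces to connected components (Proposition 3.2) and regards each component as a tree-like algebra whose core is its finite set of cyclic elements. In Theorem 3.6 it then counts orbits recursively. A tuple in a tree $A_x$ is classified by which height-one subtrees $A_y$ its entries lie in, the isomorphism types of those $A_y$, and the orbits of the subtuples inside each $A_y$ (by induction on height). These data are then glued over orbits of core tuples.

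\textbf{Your route.} You replace the recursion by a single global invariant. This is the isomorphism type of $\langle\bar a\rangle$, which has at most $n(H+L)$ elements for $H,L$ the bounds on height and cycle length, with each point labelled by the isomorphism type of the tree hanging below it. Only finitely many labels occur simply because that type is $\mathrm{Aut}$-invariant; the paper makes the same observation for the $A_y$'s. So your detour through bounding multiplicities is unnecessary. Equal data yield an automorphism directly: at each node of $\langle\bar a\rangle$ the outside children of each type are equinumerous (a cardinal $\le\aleph_0$ minus a finite number). You should say explicitly that the same subtraction matches the components of $M$ disjoint from $\langle\bar a\rangle$ with those disjoint from $\langle\bar b\rangle$. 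Your description of ``how $A$ sits inside $M$'' only mentions children.

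\textbf{Comparison.} Your route is shorter and shows more: the expansion of $M$ by finitely many invariant unary predicates is ultrahomogeneous. However, it uses mono-unarity essentially. With several functions, an isomorphism of generated substructures preserving 1-orbits need not extend, e.g.\ $\langle a_1,b_1\rangle\to\langle a_1,b_2\rangle$ in Section 2. That is why the paper avoids homogeneity statements and touches the core only through the hypothesis on $m$-orbits of $X$. This is what lets Theorem 3.6 cover tree-like unary algebras.
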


\begin{theorem*}[B]\cite[Prop 4.1]{q}
    A mono-unary algebra is ultrahomogeneous if it is 1-ultrahomogeneous.
\end{theorem*}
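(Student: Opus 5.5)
We prove Theorem (B): a $1$-ultrahomogeneous mono-unary algebra $\A=(A,f)$ is ultrahomogeneous. Here $1$-ultrahomogeneity means that any isomorphism between substructures generated by single elements extends to an automorphism. The idea is to show that every finitely generated substructure is built from $1$-generated pieces in a controlled way, and then to assemble automorphisms piece by piece.

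\textbf{Setup.} Let $\phi:B\to C$ be an isomorphism between finitely generated substructures. Take a generating set $b_1,\dots,b_n$ of $B$ that is irredundant, so no $b_i$ lies in $\langle b_j\rangle$ for $j\neq i$. The substructure $\langle b\rangle=\{f^k(b):k\ge 0\}$ is a forward orbit: either an infinite ray, or a tail leading into a cycle.

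\textbf{Step 1: connected case.} First treat $B$ lying inside a single connected component of $\A$ (connectivity meaning $f^m(x)=f^{m'}(y)$ for some $m,m'$). Then each pair of forward orbits $\langle b_i\rangle,\langle b_j\rangle$ meets. Let $B'$ be the substructure on which the orbits merge. We proceed by induction on $n$.
\begin{enumerate}[label=(\roman*)]
\item By $1$-ultrahomogeneity, extend $\phi|_{\langle b_1\rangle}$ to an automorphism $\sigma$. Replacing $\phi$ by $\sigma^{-1}\circ\phi$, we may assume $\phi$ fixes $\langle b_1\rangle$ pointwise.
\item We now need automorphisms that fix $\langle b_1\rangle$ and move $b_2$ to $\phi(b_2)$. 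Both $b_2$ and $\phi(b_2)$ have the same merge point $m$ with $\langle b_1\rangle$, at the same depth, because $\phi$ preserves the equations $f^k(b_2)=f^l(b_1)$.
\item The key lemma to prove is that $1$-ultrahomogeneity forces the following: for any $c$, the automorphism group of $\A$ acts transitively on the preimage branches above any point $c$ that have the same isomorphism type. This should follow by applying $1$-ultrahomogeneity to maps $\langle x\rangle\to\langle y\rangle$ with $f(x)=f(y)=c$. Here $\langle x\rangle\cong\langle y\rangle$ via $x\mapsto y$ whenever $x\notin\langle y\rangle$ and $y\notin\langle x\rangle$, which holds off cycles. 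It is this fact that lets us swap subtrees hanging at a common point while fixing everything below.
\item Swapping the subtree hanging at $m$ containing $\phi(b_2)$ with the one containing $b_2$ gives an automorphism fixing $\langle b_1\rangle$. We then iterate this over the generators, using irredundancy to ensure that the swaps do not disturb the earlier generators.
\end{enumerate}

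\textbf{Step 2: several components.} Group the generators by connected component. Step 1 handles each component separately. The images $\phi(B\cap K)$ land in components $K'$ that are isomorphic to $K$: applying $1$-ultrahomogeneity to one generator already maps $K$ onto $K'$. Distinct components go to distinct components, since $\phi$ is an isomorphism and "meeting" is expressed by equations. An automorphism permuting components is then obtained by combining the component isomorphisms $K\to K'$ with the inverse maps $K'\to K$, and the identity elsewhere; if the source and target components overlap, we decompose the permutation into cycles.

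\textbf{Main obstacle.} I expect the obstacle to be the key lemma in Step 1(iii). When a cycle is present, $\langle x\rangle$ may contain the cycle, so the swap of hanging subtrees must fix the cycle pointwise. One must check that the automorphism provided by $1$-ultrahomogeneity can be modified to be the identity outside the two swapped branches. I would handle this by restricting the automorphism to the branch above $c$ containing $x$, composing with its inverse on the branch containing $y$, and defining the map to be the identity elsewhere. This restriction is well defined because automorphisms fixing $c$ and sending $x\mapsto y$ map the branch of $x$ at $c$ onto the branch of $y$ at $c$.
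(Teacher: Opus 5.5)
The paper does not prove (B); it only quotes it from Quinn-Gregson, so this review judges your argument on its own. Your overall strategy (fix the generators one at a time, correcting by automorphisms that swap two branches and are the identity elsewhere) is sound. However, Step 1(iii)--(iv) as written has a gap: it does not produce an automorphism sending $b_2$ to $\phi(b_2)$.

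Your swap at $m$ is built from an automorphism extending $x\mapsto y$, where $x,y$ are the predecessors of $m$ on the paths from $b_2$ and $\phi(b_2)$. Such a swap moves $b_2$ into the branch $T_y$ containing $\phi(b_2)$. But it lands on \emph{some} element of the right depth there, not necessarily on $\phi(b_2)$. If $x=y$, i.e.\ $b_2$ and $\phi(b_2)$ already lie in the same branch at $m$, the swap is the identity and achieves nothing. Relatedly, branch-transitivity is not really where $1$-ultrahomogeneity enters, since swapping two isomorphic branches at a common point is always an automorphism. What you actually need is a branch isomorphism matching the specific points $b_2\mapsto\phi(b_2)$. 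Also, for later generators the reference set must be $D=\langle b_1,\dots,b_{i-1}\rangle$ rather than $\langle b_1\rangle$. Irredundancy is not what keeps earlier generators fixed.

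The repair uses only your ingredients. Suppose $\phi$ fixes $D$ pointwise, $b_i\notin D$, and $\langle b_i\rangle\cap D\neq\emptyset$ (as in your connected case). Let $k\ge 1$ be least with $f^k(b_i)\in D$. Since $\phi$ fixes $D$, also $f^k(\phi(b_i))=f^k(b_i)$, so there is a least $j\le k$ with $f^j(b_i)=f^j(\phi(b_i))$; if $j=0$ there is nothing to do. Put $x'=f^{j-1}(b_i)$ and $y'=f^{j-1}(\phi(b_i))=\phi(x')$. Then:
\begin{itemize}
\item $x'\neq y'$ and $f(x')=f(y')$.
\item Neither is cyclic: if $x'$ were, so would be $y'$, and two cyclic preimages of one point coincide.
\item Neither lies in $D$, because $j-1<k$ and $\phi$ fixes $D$ injectively.
\end{itemize}
Hence the branches $T_{x'},T_{y'}$ (elements some iterate of which equals $x'$, resp.\ $y'$) are disjoint from each other and from $D$, since $D$ is closed under $f$. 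Now apply $1$-ultrahomogeneity to $b_i\mapsto\phi(b_i)$ itself, giving $\tau$ with $\tau(x')=y'$ and hence $\tau(T_{x'})=T_{y'}$. The map equal to $\tau$ on $T_{x'}$, $\tau^{-1}$ on $T_{y'}$, and the identity elsewhere is an automorphism fixing $D$ pointwise and sending $b_i$ to $\phi(b_i)$. Alternatively, you can keep your $x\mapsto y$ swaps but iterate them up the path, inducting on $j$.

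The same device makes Step 2 optional. If $\langle b_i\rangle\cap D=\emptyset$, then the components of $b_i$ and $\phi(b_i)$ both miss $D$. Take $\tau$ on the first, $\tau^{-1}$ on the second if they differ, and the identity elsewhere. Your chain-closing argument for permuting components is also fine.
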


He then asked if these two also hold for unary algebras -- sets with finitely many unary functions. This note answers the second question (i.e., whether there is the corresponding Theorem (B) for unary algebras) negatively  by giving a simple counterexample. We also prove the corresponding Theorem (A) for unary algebras\footnote{We refer to this as the first question in the following.} in a very special case which actually implies Quinn-Gregson's theorem and has a small combinatorial implication. To keep this note short, we refer to the introduction of \cite{q} for background. One important historical fact we need to mention here is that there is substantial overlap between the above two theorems of Quinn-Gregson and the results obtained by a group of Russian mathematicians in the last century. In particular, Thm A was proved by Shishmarev in \cite[Thm 1]{S} and a complete description of theories of mono-unary algebras with quantifier elimination was given by Ivanov in \cite[Thm 2.6]{I}\footnote{I would like to thank Alexander Ivanov for bringing this fact into my attention. }.

\section{Basics}

We record some basic definitions and facts. A \textit{partial unary} algebra $\A=(A,f_1,...,f_n)$ is a set $A$ with finitely many unary partial functions $f_i$'s. For any structure $\mathcal{M}$,  $Aut(\mathcal{M})$ is the set of all automorphisms of $\mathcal{M}$ which acts on $M^n$ for any $n\in \mathbb{N}^+$ in the obvious way. We call $Aut(\mathcal{M})$ \textit{oligomorphic} if the number of $n$-orbits (i.e., orbits of the action on $M^n$) is finite for any $n\in \mathbb{N}^+$. 

For a countably infinite structure $\mathcal{M}$, it is $\omega$-categorical if the first-order theory of $\mathcal{M}$ has a unique countably infinite model up-to-isomorphism. The following is the well-known Ryll-Nardzewski theorem, the proof of which can be found in any standard textbook about model theory (see for example \cite{2}):

\begin{theorem}
    A countably infinite structure $\mathcal{M}$ is $\omega$-categorical if and only if $Aut(\mathcal{M})$ is oligomorphic.
\end{theorem}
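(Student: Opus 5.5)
The plan is to prove both directions of the Ryll-Nardzewski theorem through the correspondence between $n$-types over $\emptyset$ realized in $\mathcal{M}$ and $n$-orbits of $Aut(\mathcal{M})$. Throughout, the language is assumed countable, which is the case for the finite unary languages considered here. Let $T=Th(\mathcal{M})$.

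\textbf{Oligomorphic implies $\omega$-categorical.} Suppose $Aut(\mathcal{M})$ is oligomorphic. Automorphisms preserve types, so each $n$-orbit lies inside a single type. Hence only finitely many $n$-types are realized in $\mathcal{M}$, say $p_1,\dots,p_k$.
\begin{itemize}
\item \emph{Isolating formulas.} For $i\neq j$, pick a formula in $p_i$ but not in $p_j$. Conjoining these gives, for each $i$, a formula $\theta_i$ that holds in $p_i$ and fails in every other realized type.
\item \emph{Finitely many types in $T$.} By construction $\mathcal{M}\models\forall\bar x\,\bigvee_i\theta_i(\bar x)$. This sentence is part of $T$, so every $n$-type of $T$ contains some $\theta_i$. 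The $\theta_i$ are pairwise contradictory, and $\theta_i$ determines $p_i$: if $\psi\in p_i$ then $\forall\bar x(\theta_i\to\psi)$ holds in $\mathcal{M}$, because every realization of $\theta_i$ in $\mathcal{M}$ has type $p_i$. Therefore $S_n(T)=\{p_1,\dots,p_k\}$ is finite and every type is isolated.
\item \emph{Back-and-forth.} Let $\mathcal{N}$ be any countable model of $T$. Then $\mathcal{M}$ and $\mathcal{N}$ are both atomic. Build an isomorphism by back-and-forth, keeping the invariant that $\operatorname{tp}^{\mathcal{M}}(\bar a)=\operatorname{tp}^{\mathcal{N}}(\bar b)$.
\item \emph{Extension step.} Given a new element $c\in M$, let $\phi(\bar x,y)$ isolate $\operatorname{tp}(\bar a c)$. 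Then $\exists y\,\phi(\bar x,y)$ lies in $\operatorname{tp}(\bar a)=\operatorname{tp}(\bar b)$. So some $d\in N$ satisfies $\phi(\bar b,d)$, and isolation gives $\operatorname{tp}(\bar b d)=\operatorname{tp}(\bar a c)$. The backward step is symmetric. The union of this chain is an isomorphism $\mathcal{M}\cong\mathcal{N}$.
\end{itemize}

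\textbf{$\omega$-categorical implies oligomorphic.} Suppose $T$ is $\omega$-categorical.
\begin{itemize}
\item \emph{Finitely many types.} Argue by contradiction: suppose some $S_n(T)$ is infinite. Being compact, it has a non-isolated type $p$. By the omitting types theorem (which needs the countable language), there is a countable model omitting $p$. Since the structure is infinite, there is also a countable model realizing $p$, by compactness and downward Löwenheim–Skolem. These two models are not isomorphic, which contradicts $\omega$-categoricity. So every $S_n(T)$ is finite, and every type is isolated.
\item \emph{Homogeneity.} By the previous paragraph the countable model $\mathcal{M}$ is atomic. Run the same back-and-forth from $\mathcal{M}$ to itself, starting from any two tuples $\bar a,\bar b$ with the same type. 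This produces an automorphism sending $\bar a$ to $\bar b$.
\item \emph{Conclusion.} So $n$-orbits correspond exactly to realized $n$-types, and there are finitely many of these. Hence $Aut(\mathcal{M})$ is oligomorphic.
\end{itemize}

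The main obstacle is the step from $\omega$-categoricity to finitely many types, since it relies on the omitting types theorem. Everything else is routine back-and-forth. Given that the paper calls this a standard fact, I would cite \cite{2} for the omitting types theorem, and for the theorem as a whole, rather than reprove it.
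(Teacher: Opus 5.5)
Your proof is correct. It is the standard textbook argument: orbits versus realized types, the isolating formulas obtained from the sentence $\forall\bar x\bigvee_i\theta_i$, omitting types for the converse, and back-and-forth between countable atomic models. The paper does not prove the theorem itself but cites Tent--Ziegler for exactly this argument. You are right to state the countable-language hypothesis explicitly, since the omitting types step needs it. It holds in the paper's setting of finitely many unary functions.
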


A structure $\M$ is \textit{locally finite} if for any $n\in\mathbb{N}$, every $n$-generated substructure of $\M$ is finite. It is \textit{uniformly locally finite} if there is a function $f: \mathbb{N}\rightarrow \mathbb{N}$ such that every $n$-generated substructure has no more than $f(n)$ many elements.

\begin{definition}
    A structure $\mathcal{M}$ is \textit{n-ultrahomogeneous} if every isomorphism between $n$-generated substructures of $\mathcal{M}$ extends to an automorphism of $\mathcal{M}$. $\mathcal{M}$ is \textit{ultrahomogeneous} if it is $n$-ultrahomogeneous for each $n\in \mathbb{N}$.
\end{definition}

\noindent\textbf{Acknowledgments}.      The author is very grateful to his supervisor Nick Ramsey for helpful comments which improved the first draft. The author is also thankful to Alexander Ivanov for his comments about the history of model theory of unary algebras.

\section{A counterexample}

In this section, we give an example to show that the second question has a negative answer. We define a unary algebra $\A=(A,f_1,f_2)$ with two unary functions $f_1,f_2$ as follows (if no $f_i$-arrow comes out of a point, that means $f_i$ maps that element to itself):

$$\xymatrix{
v_1\ar[d]_{f_1}\ar[dr]^{f_2}    &     &  v_2\ar[d]_{f_1}\ar[dr]^{f_2} &  & \\
b_1\ar[d]_{f_2} & a_1\ar[d]^{f_1} &  b_2\ar[d]_{f_2} & a_2\ar[d]^{f_1} & \\
c_1\ar[d]_{f_1}& d_1\ar[d]^{f_2}&  c_2\ar[d]_{f_1} & d_2\ar[d]^{f_2} &  \\
e_1\ar[u]_{f_2} & w_1 & e_2\ar[u]_{f_2} & w_2 & 
}$$
\\

Note $f_1(c_i)=e_i$ while $f_2(e_i)=c_i$ for any $i\in \{1,2\}$. There is no $f_2$-arrow coming out of $c_i$'s and similarly, there is no $f_1$-arrow coming out of $e_i$'s (namely they are fixed points of $f_2$ and $f_1$ respectively). Additionally, $|\langle v_i\rangle|=6$, $|\langle b_i\rangle|=|\langle a_i\rangle|=3$, $|\langle c_i\rangle|=|\langle d_i\rangle|=|\langle e_i\rangle|=2$ and $|\langle w_i\rangle|=1$ for any $1\leq i\leq 2$ where ``$\langle \text{ } \rangle$" denotes the substructure generated by those elements inside the bracket.

\begin{proposition}
    The above unary algebra is 1-ultrahomogeneous but not ultrahomogeneous.   
\end{proposition}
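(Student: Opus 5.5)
The plan is to first pin down the combinatorics of the algebra by reading off $f_1,f_2$ from the diagram, using the convention that a missing $f_i$-arrow means $f_i$ fixes that point. Concretely: $f_1(v_i)=b_i$, $f_2(v_i)=a_i$; $f_2(b_i)=c_i$ and $f_1(b_i)=b_i$; $f_1(c_i)=e_i$ and $f_2(c_i)=c_i$; $f_2(e_i)=c_i$ and $f_1(e_i)=e_i$; $f_1(a_i)=d_i$ and $f_2(a_i)=a_i$; $f_2(d_i)=w_i$ and $f_1(d_i)=d_i$; $w_i$ is fixed by both.

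\textbf{Step 1: isomorphism types of singly generated substructures.} I will show that the isomorphism type of the pointed structure $(\langle x\rangle,x)$ determines the ``letter'' of $x$, i.e.\ which of $v,a,b,c,d,e,w$ it is. Sizes already separate most letters. The remaining pairs are separated as follows.
\begin{itemize}
\item $a$ versus $b$: $f_2$ fixes $a$, whereas $f_1$ fixes $b$.
\item $c$, $d$, $e$: in $\langle c\rangle$ and $\langle e\rangle$ the two elements form a $2$-cycle-like pair under $f_1,f_2$, while $d$ maps to a point fixed by both functions; and $c$ is $f_2$-fixed while $e$ is $f_1$-fixed.
\end{itemize}
Since any isomorphism $\phi$ between substructures preserves the isomorphism type of $(\langle x\rangle,x)$, $\phi$ sends $x_i$ to some $x_j$ with the same letter. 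Let $\tau$ be the automorphism swapping the indices $1\leftrightarrow 2$; it is clearly an automorphism.

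\textbf{Step 2: 1-ultrahomogeneity.} Let $\phi:\langle x\rangle\to\langle y\rangle$ be an isomorphism. Then $\phi(x)$ generates $\langle y\rangle$. By Step 1, $\phi(x)$ has the same letter as $x$, say $x=\ell_i$ and $\phi(x)=\ell_j$. Since $\phi$ commutes with the $f_k$, it is determined by its value on the generator $x$. So $\phi$ is the restriction of the identity (if $i=j$) or of $\tau$ (if $i\ne j$). Either way $\phi$ extends to an automorphism. This step is routine; the only care needed is that a $1$-generated substructure may have several generators, e.g.\ $\langle c_i\rangle=\langle e_i\rangle$, but the argument above handles this uniformly.

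\textbf{Step 3: failure of 2-ultrahomogeneity.} Consider the substructures
\[
S=\langle c_1,a_2\rangle=\{c_1,e_1,a_2,d_2,w_2\}
\quad\text{and}\quad
T=\langle c_1,a_1\rangle=\{c_1,e_1,a_1,d_1,w_1\}.
\]
Each is a disjoint union of a copy of $\langle c\rangle$ and a copy of $\langle a\rangle$. Hence the map $\phi:S\to T$ fixing $c_1,e_1$ and sending $a_2\mapsto a_1$, $d_2\mapsto d_1$, $w_2\mapsto w_1$ is an isomorphism.

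Suppose $\sigma\in Aut(\A)$ extends $\phi$. By Step 1, $\sigma(v_1)=v_j$ for some $j$. Then
\[
c_j=f_2f_1(v_j)=\sigma(f_2f_1(v_1))=\sigma(c_1)=c_1,
\]
so $j=1$. Consequently $\sigma(a_1)=\sigma(f_2(v_1))=f_2(v_1)=a_1$. But $\sigma(a_2)=a_1$ as well, which contradicts injectivity. So $\A$ is not $2$-ultrahomogeneous, hence not ultrahomogeneous.

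The main obstacle is Step 1: it requires carefully verifying that no two distinct letters yield isomorphic pointed $1$-generated substructures, since everything else rests on automorphisms and isomorphisms preserving letters.
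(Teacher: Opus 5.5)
Your proof is correct and follows essentially the paper's route. For 1-ultrahomogeneity, you use cardinality and fixed-point invariants to force an isomorphism $\langle x_i\rangle\to\langle x_j\rangle$ to preserve letters, so it is the restriction of the identity or the index swap. For non-ultrahomogeneity, you use a 2-generated substructure that mixes the branches below $v_1$ and $v_2$. The differences are only cosmetic: you package the paper's case check as one letter-preservation lemma, and you use the pair $\langle c_1,a_2\rangle\to\langle c_1,a_1\rangle$ with the forward computation $f_2f_1(v_j)=c_j$, where the paper uses $\langle a_1,b_1\rangle\to\langle a_1,b_2\rangle$ with a preimage argument.
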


\begin{proof}
First, we check that it is not ultrahomogeneous: consider the isomorphism $\Phi$ from $\langle a_1,b_1 \rangle$ onto $\langle a_1, b_2\rangle$ which maps $a_1$ to $a_1$ and $b_1$ to $b_2$. This isomorphism cannot be extended to an automorphism of $\A$. As any automorphism extending $\{(a_1,a_1)\}$ should map $v_1$ to itself while any automorphism extending $\{(b_1,b_2)\}$ should map $v_1$ to $v_2$. 

Second, we show that $\A$ is 1-ultrahomogeneous. For $\langle v_i\rangle$ where $1\leq i\leq 2$, by cardinality reasons, an isomorphism with $\langle v_i\rangle$ as its domain can only have $\langle v_j\rangle$ as its image where $1\leq j\leq 2$. It is easy to see that any such isomorphism can be extended to an automorphism \textemdash simply exchanging elements in $\langle v_i\rangle$ and $\langle v_j\rangle$ correspondingly and fixing the rest (note the isomorphism must map $b_i,a_i,c_i,d_i,e_i,w_i$ to  $b_j,a_j,c_j,d_j,e_j,w_j$ respectively). For $\langle b_i\rangle$ where $1\leq i\leq 2$, by cardinality reasons and the fact that $f_1(b_i)=b_i$ while $f_1(a_i)\not=a_i$, an isomorphism with $\langle b_i\rangle$ as its domain can only have $\langle b_j\rangle$ as its image where $1\leq j\leq 2$ and any such isomorphism can clearly be extended to an automorphism by the same argument for $\langle v_i\rangle$'s. Similarly for $\langle a_i\rangle$'s and $\langle c_i \rangle$'s. For $\langle d_i\rangle$ where $1\leq i\leq 2$, we note that $f_2(d_i)\not= d_i$ and $f_1 (f_2(d_i)) = f_2(d_i)$ while $f_2(c_i)=c_i$ and $f_1(f_2(e_i))\not = f_2(e_i)$, and hence an isomorphism with $\langle d_i\rangle$ as its domain can only have $\langle d_j\rangle$ as its image. For $\langle e_i \rangle$, by cardinality reasons and the cases we've already considered, an isomorphism with $\langle e_i\rangle$ as its domain can only have $\langle e_j\rangle$ as its image. The case for $\langle w_i \rangle$'s is just by cardinality reasons.

\end{proof}

Taking the disjoint union of $\omega$ many copies of $\A$, we have the following as well:

\begin{proposition}
    There exists a uniformly locally finite countably infinite unary algebra which is 1-ultrahomogeneous but not ultrahomogeneous. 
\end{proposition}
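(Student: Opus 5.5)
Let $\mathfrak{B}$ be the disjoint union of $\omega$ copies $\A^{(k)}$, $k\in\omega$, of the algebra $\A$ from the previous proposition, with $f_1,f_2$ acting componentwise. Then $\mathfrak{B}$ is a countably infinite unary algebra. The plan is to check three properties in turn: uniform local finiteness, 1-ultrahomogeneity, and failure of ultrahomogeneity.

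For uniform local finiteness, each generator lies in a single copy of $\A$, and each copy is closed under $f_1,f_2$. Hence an $n$-generated substructure is a union of at most $n$ substructures, each generated by one element inside a copy of $\A$. The listed sizes of $\langle v_i\rangle,\langle b_i\rangle,\ldots$ are all at most $6$. So an $n$-generated substructure has at most $6n$ elements, and $f(n)=6n$ witnesses uniform local finiteness.

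For 1-ultrahomogeneity, let $\Phi:\langle x\rangle\to\langle y\rangle$ be an isomorphism with $x\in\A^{(k)}$ and $y\in\A^{(l)}$. Isomorphism type is intrinsic to the generated substructure, so the case analysis in the previous proof applies verbatim. It uses only cardinalities and whether $f_1,f_2$ fix certain elements of $\langle x\rangle$. That analysis shows $x$ and $y$ are corresponding points, say $x=p_i^{(k)}$ and $y=p_j^{(l)}$ for the same letter $p\in\{v,b,a,c,d,e,w\}$. Moreover $\Phi$ maps each element of $\langle x\rangle$ to the identically labelled element with index $j$ in copy $l$. To extend $\Phi$, first take the automorphism $\sigma$ of $\mathfrak{B}$ that swaps copies $k$ and $l$ via the identity labelling, fixing everything else; this is trivial if $k=l$. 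Then compose with the automorphism of $\A^{(l)}$ that swaps indices $1$ and $2$ if $i\neq j$, and fix all other copies. The composite is an automorphism of $\mathfrak{B}$ extending $\Phi$.

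For failure of ultrahomogeneity, take the isomorphism $\langle a_1,b_1\rangle\to\langle a_1,b_2\rangle$ inside the copy $\A^{(0)}$. As before, $v_1^{(0)}$ is the unique element with $f_2(v)=a_1^{(0)}$, so any automorphism extending the map must fix it. But $v_1^{(0)}$ is also the unique element with $f_1(v)=b_1^{(0)}$, so the automorphism must send it to $v_2^{(0)}$, the unique $f_1$-preimage of $b_2^{(0)}$. These requirements conflict, so no extension exists.

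The main obstacle is the uniqueness of the preimages used in the last step, which the earlier argument used implicitly. The other copies of $\A$ introduce no new preimages, since $f_1,f_2$ preserve copies. Within a copy, the only $f_2$-preimage of $a_1$ is $v_1$, and the only $f_1$-preimage of $b_i$ is $v_i$, because $b_i$ is not $f_1$-fixed. This can be read off the diagram. Everything else is routine.
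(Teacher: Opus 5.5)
Your construction is the paper's (the disjoint union of $\omega$ copies of $\A$), and your 1-ultrahomogeneity argument is sound. Each 1-generated substructure is rigid, and the six types $\langle v\rangle,\langle b\rangle,\langle a\rangle,\langle c\rangle=\langle e\rangle,\langle d\rangle,\langle w\rangle$ are pairwise non-isomorphic. So $\Phi$ is the label-preserving map, and your copy-swap composed with an index-swap extends it. Only this label-preservation is used; $x,y$ themselves need not carry the same letter, e.g.\ $\langle c_1\rangle\to\langle e_2\rangle$ with $c_1\mapsto c_2$. However, two concrete claims in your write-up are false.

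\textbf{The preimage step.} The step you call the main obstacle misreads the diagram. A point with no $f_i$-arrow is $f_i$-fixed, so $f_2(a_i)=a_i$ and $f_1(b_i)=b_i$; the paper states the latter explicitly. Hence $f_2^{-1}(a_1)=\{v_1,a_1\}$ and $f_1^{-1}(b_2)=\{v_2,b_2\}$. Neither preimage is unique, and ``$b_i$ is not $f_1$-fixed'' is wrong. The conclusion still holds after one extra step using injectivity. Work in copy $0$ and suppose $g$ extends the map. Then $f_2(g(v_1))=g(a_1)=a_1$ gives $g(v_1)\in\{v_1,a_1\}$. Since $g(v_1)\neq g(a_1)=a_1$, we get $g(v_1)=v_1$. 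Then $g(b_1)=f_1(g(v_1))=f_1(v_1)=b_1\neq b_2$, a contradiction. Alternatively, $g(v_1)$ must generate a $7$-element substructure, hence is a $v$-point, and you can restrict the preimage count to $v$-points.

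\textbf{The size bound.} $\langle v_i\rangle=\{v_i,b_i,a_i,c_i,d_i,e_i,w_i\}$ has $7$ elements, not $6$; the paper's stated count is off. So $f(n)=6n$ already fails at $n=1$, and you should use $f(n)=7n$.
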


\section{Tree-like unary algebras}

Due to the above counterexample, Cor 3.11 in $\cite{q}$ is not true for unary algebras in general as it would imply Theorem (B) for unary algebras. And since Quinn-Gregson used that corollary to prove Theorem (A), even if the first question has a positive answer, the proof strategy should be different. In this section, using a counting argument, we generalize Theorem (A) for a very special kind of unary algebras.

\begin{definition}
    A unary algebra $(A,f_1,...,f_n)$ is connected if the (undirected) graph $(A,R)$ is connected where $Rab:= \bigvee^n_{i=1} (f_i(a)=b\lor f_i(b)=a)$ for any $a\not=b\in A$ . A connected component of a unary algebra is a maximal connected subalgebra.
\end{definition}

When considering $\omega$-categoricity, we can always focus on connected unary algebras because of the following simple observation, which is needed for Corollary 3.7. The proof for the case of mono-unary algebras can be found in \cite[Prop 5.5]{q}.

\begin{proposition}
    A countably infinite unary algebra is $\omega$-categorical if and only if it has finitely many connected components up-to-isomorphism and each connected component is $\omega$-categorical if infinite.
\end{proposition}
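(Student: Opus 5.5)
We prove both directions through the Ryll-Nardzewski theorem (Theorem 2.1): $\omega$-categoricity of a countably infinite $\A$ is equivalent to $Aut(\A)$ being oligomorphic. The basic fact we use repeatedly is that an automorphism of $\A$ permutes the connected components and restricts to isomorphisms between them. Conversely, any family of isomorphisms $C\to D$ between components, assembled along a permutation of the components, glues to an automorphism of $\A$. Gluing works because every $f_i$ maps a component into itself: $a$ and $f_i(a)$ are $R$-related or equal.

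\emph{($\Rightarrow$).} Assume $Aut(\A)$ is oligomorphic. For the first claim, consider the definable relation "$x,y$ lie in the same component"; it is not first-order, but we avoid it. Instead, take elements $a,b$ from components $C\not\cong D$. No automorphism sends $a$ into $D$, since it would map $C$ onto $D$. So picking one element from each isomorphism type of component gives pairwise distinct $1$-orbits. Finitely many $1$-orbits therefore forces finitely many components up to isomorphism.

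For the second claim, let $C$ be an infinite component and fix $n$. Every $g\in Aut(C)$ extends to an automorphism of $\A$ by the identity elsewhere, but we need the converse direction. Suppose $\bar a,\bar b\in C^n$ lie in the same $Aut(\A)$-orbit via some $\sigma$. Then $\sigma(C)$ is a component containing $\bar b$, hence $\sigma(C)=C$, and $\sigma|_C\in Aut(C)$ maps $\bar a$ to $\bar b$. So the $n$-orbits of $Aut(C)$ on $C^n$ inject into the $n$-orbits of $Aut(\A)$, and $Aut(C)$ is oligomorphic. Since $C$ is countably infinite, Theorem 2.1 makes $C$ $\omega$-categorical.

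\emph{($\Leftarrow$).} Let $C_1,\dots,C_k$ be representatives of the isomorphism types. Take an $n$-tuple $\bar a$ of $\A$. Its coordinates lie in at most $n$ components, and we record a \emph{type pattern}: the partition of $\{1,\dots,n\}$ by "same component", and for each block the index $j$ with that component $\cong C_j$. There are finitely many patterns. For a block whose component is isomorphic to $C_j$, transport the subtuple to $C_j$ by a fixed isomorphism, and note the $Aut(C_j)$-orbit of the transported subtuple. If $C_j$ is infinite this orbit lies among finitely many by Theorem 2.1. If $C_j$ is finite, there are trivially finitely many tuples. The choice of transporting isomorphism changes the result only by an element of $Aut(C_j)$, so the orbit is well-defined.

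Now suppose two tuples $\bar a,\bar b$ have the same pattern and the same orbit data blockwise. Then there are isomorphisms from each component of $\bar a$ to the corresponding component of $\bar b$ carrying the subtuple correctly; these components are distinct for distinct blocks. Extend this partial bijection of components to a permutation of all components that preserves isomorphism type. This is possible because the remaining components of each type have equal cardinalities on both sides: the total count of that type minus the same finite number. We may use arbitrary isomorphisms on the rest and glue. So $\bar a$ and $\bar b$ are in the same $Aut(\A)$-orbit, the number of $n$-orbits is bounded by (number of patterns) $\times$ (finitely many orbit choices), and Theorem 2.1 finishes.

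The main obstacle is the bookkeeping in the gluing step, in particular matching the remaining components when some type occurs infinitely often versus finitely often. The cardinality argument above handles both cases. The other point needing care is checking that components are indeed subalgebras, so that gluing respects each $f_i$.
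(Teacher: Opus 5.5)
Your proof is correct. It uses Ryll-Nardzewski in both directions: components are subalgebras, automorphisms permute them, the $n$-orbits of a component inject into those of $\A$, and in the converse you count orbits by pattern and blockwise orbit data and then glue along a type-preserving permutation of components, with the cardinality matching handling both finitely and infinitely many copies of a type. The paper gives no proof of its own: it calls the statement a simple observation and cites Prop 5.5 of \cite{q} for the mono-unary case, and your argument is the natural one, using the same decompose--transport--glue counting that the paper uses at the end of the proof of Theorem 3.6.
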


\begin{definition}
    An element of a unary algebra $\A=(A,f_1,...,f_n)$ is \textit{cyclic} if $f^{n_1}_{i_1}...f^{n_k}_{i_k}(x)=x$ for some $1 \leq i_1,...,i_k\leq n$ and $n_1,...,n_k\in \mathbb{N}$ where $1\leq n_1+...+n_k$\footnote{Namely, there is some element $m$ (not the identity) in the monoid generated by $f_i^A$s such that $m(x)=x$.}.

    A cyclic element is \textit{properly cyclic} if $f^{n_1}_{i_1}...f^{n_k}_{i_k}(x)=x$ for some $1 \leq i_1,...,i_k\leq n$ where $n_1,...,n_k\in \mathbb{N}$ and one of $f_{i_k}(x),..., f^{n_k}_{i_k}(x),..., f^{n_2}_{i_2}...f^{n_k}_{i_k}(x),f_{i_1}f^{n_2}_{i_2}...f^{n_k}_{i_k}(x)\\,..., f^{n_1}_{i_1}...f^{n_k}_{i_k}(x)$ is different from $x$ (i.e., not every element in the cycle is $x$).     
\end{definition}

For each locally finite unary algebra $\A$, we see immediately that it must have cyclic elements. Moreover, the set of all cyclic elements and the set of all properly cyclic elements are clearly invariant under $Aut(\A)$, which motivates the following definition:

\begin{definition}
    A subset $X\subseteq A$ is a \textit{core} of the unary algebra $\A$ if the following three hold:
\begin{enumerate}
    \item[1)] $X$ is a subalgebra of $\A$.
    \item[2)] $X$ is invariant under $Aut(\A)$.
    \item[3)] $X$ contains all properly cyclic elements of $\A$.
\end{enumerate}
   
\end{definition}

In particular, a core of $A$ is itself a unary algebra. As $X$ is invariant under $Aut(\A)$, $\{g|_X \mid g\in Aut(\A)\}$ is a subgroup of $Aut(X)$, and so it makes sense to talk about orbits of $X$ under $\{g|_X \mid g\in Aut(\A)\}$.

\begin{definition}
    $\A=(A,f_1,...,f_n)$ is a \textit{tree-like} unary algebra if there exists a core $X\subseteq A$ such that the following hold:
\begin{enumerate}
    \item [1)] $A=\bigsqcup_{x\in X} A_x$ (disjoint union as sets) where $A_x=\{x\}\cup \{a\in A\mid f^{n_1}_{i_1}...f^{n_k}_{i_k}(a)= x \text{ for some } 1 \leq i_1,...,i_k\leq n$ and $n_1,...,n_k\in \mathbb{N} \text{ where none of } a, \\f_{i_k}(a),...,f^{n_k}_{i_k}(a),...,f^{n_2}_{i_2}...f^{n_k}_{i_k}(a), f_{i_1}f^{n_2}_{i_2}...f^{n_k}_{i_k}(a), ..., f^{n_1-1}_{i_1}...f^{n_k}_{i_k}(a) \text{ is in } X\}$
    \item[2)] $(A_x,R)$ is a rooted tree with $x$ as its root where $R$ is defined above in Def 3.1.
\end{enumerate}
\end{definition}

Note that for any $a \in A_x\setminus \{x\}$, $f_i(a)\in A_x$ for any $1\leq i\leq n$: otherwise, $a\not\in X$, $f_i(a)\in A_y$ for some $y\not=x$, and thus $a\in A_y$, contradicting the disjointness of $A_x$ and $A_y$. We will refer to this as the closure property of $A_x$'s. And by the definition of cores, we know that $A_x \setminus\{x\}$ has no properly cyclic elements for any $x\in X$. In the following, we may simply call a partial unary algebra $\A$ a rooted tree for convenience if $(A,R)$ is a rooted tree.

Intuitively speaking, a tree-like unary algebra is one with a center where each element of the center has a tree attached. The following is the main result of this section:

\begin{theorem}
 Let $\A= (A,f_1,...,f_n)$ be a tree-like unary algebra with $X\subseteq A$ as a core. If $\A$ is locally finite with finitely many 1-orbits and  $X$ has finitely many $m$-orbits under $\{g|_X\mid g\in Aut(\A)\}$ for any $m\in\mathbb{N}$, then $Aut(\A)$ is oligomorphic. In particular, when $X$ is finite, $Aut(\A)$ is oligomorphic if $\A$ is locally finite with finitely many 1-orbits.
\end{theorem}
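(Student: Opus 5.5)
Given an $m$-tuple $\bar a=(a_1,\dots,a_m)$ of $A$, I attach to it a finite piece of data and show two things: only finitely many such data occur, and tuples with the same data lie in the same $Aut(\A)$-orbit. Both facts rest on a structural lemma. \emph{Lemma:} if $x,y\in X$ and $g|_X$ is the restriction of some $g\in Aut(\A)$, then any isomorphism of the trees $A_x\to A_y$ (rooted, as partial unary algebras) can be combined with $g$. More precisely, $\A$ is recovered by gluing the trees $A_x$ onto $X$ at their roots. The closure property and the disjointness of the $A_x$ give this: every arrow leaving a non-root point of $A_x$ stays in $A_x$, and every arrow from $X$ stays in $X$, since $X$ is a subalgebra. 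Hence any permutation that preserves $X$ as a subalgebra and maps each $A_x$ onto $A_{\sigma(x)}$ by a root-preserving tree isomorphism is an automorphism of $\A$.

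\emph{Step 1 (finiteness of tree types).} For $a\in A_x$, let $T_x(a)$ be the finite path from $a$ down to the root $x$. It is finite: it lies in $\langle a\rangle$, which is finite by local finiteness. Since $(A_x,R)$ is a tree, the path is unique, and every element of it lies in $\langle a\rangle$. I would like to bound the size of $\langle a\rangle$ uniformly. The finitely many 1-orbits give this: elements in one orbit generate isomorphic substructures, so $|\langle a\rangle|\le N$ for some $N$ and every $a$. The same then holds for the whole "branch data" of an $m$-tuple. Let $x_1,\dots,x_m$ be the roots of $a_1,\dots,a_m$ (roots of $X$-elements are themselves). Form the finite substructure $S=\langle \bar a\rangle\cup\langle x_1,\dots,x_m\rangle$. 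Its size is bounded by $mN$, and the isomorphism type of $(S,\bar a)$, marked with which points lie in $X$, ranges over finitely many possibilities.

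\emph{Step 2 (matching).} Take two tuples $\bar a,\bar b$ satisfying three conditions. First, $(x_1,\dots,x_m)$ and $(y_1,\dots,y_m)$ lie in the same $m$-orbit of $\{g|_X\}$, where $y_i$ are the roots for $\bar b$; by hypothesis there are finitely many such orbits. Second, the marked structures from Step 1 are isomorphic via a map sending $a_i\mapsto b_i$ and $x_i\mapsto y_i$. Third, for each $i$, the 1-orbit data of each $a_i$ agrees with that of $b_i$ in a suitable refined sense. I then want an automorphism mapping $\bar a\mapsto\bar b$.

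Start with $g\in Aut(\A)$ such that $g(x_i)=y_i$. Replace $\bar a$ by $g\bar a$, so that we may assume $x_i=y_i$. Now the problem lives inside the finitely many trees $A_{x_i}$. The union of the paths from the $a_i$'s to the roots forms a finite subtree $P$ of $\bigcup A_{x_i}$, and similarly $Q$ for $\bar b$. It remains to extend the isomorphism $P\to Q$ to automorphisms of these trees fixing the roots, and then apply the Lemma. Extending an isomorphism of finite rooted subtrees works level by level along the path. At each node $p$ of $P$, the hanging subtrees not in $P$ must be matched with those at the corresponding node of $Q$, with the same multiplicities of isomorphism types.

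\emph{Main obstacle.} The difficulty is that last matching. It requires showing that the isomorphism type of "the subtree hanging below a point, together with the multiset of sibling branch types" is determined by finitely many invariants. One must also show that only finitely many such configurations occur. I would get both from the finitely many 1-orbits, via a counting argument. Each $a\in A$ has a 1-orbit, and the 1-orbit of $a$ determines the isomorphism type of the pair $(A,a)$. Call a node $p$ on a path "expandable" if there is an automorphism moving $p$ while fixing its parent; the nodes that are not expandable are rigid.

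To make the counting work, I would refine the data in Step 1. Record, for each node of $P$ and each child of it in $P$, the 1-orbit of the child. Record also whether two chosen children of the same node are in the same orbit under the stabilizer of that node. There are finitely many 1-orbits, and at most $mN$ nodes, so this refined data still ranges over a finite set. I would then prove the extension inductively from the roots upward. At a node $p$ with children $c_1,\dots,c_r$ in $P$ and corresponding children $d_1,\dots,d_r$ in $Q$ of the same 1-orbit type, the same-1-orbit condition gives a root-fixing tree automorphism of the branch below $p$ sending $c_j\mapsto d_j$ one child at a time. Distinct children have disjoint subtrees, so these automorphisms can be composed independently. This induction is the delicate point.

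By Ryll-Nardzewski it is then enough to have finitely many $m$-orbits, which follows. For finite $X$ the hypothesis on $m$-orbits of $X$ holds trivially.
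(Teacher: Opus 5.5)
Your approach works, and it takes a genuinely different route from the paper's. The paper argues recursively inside each tree. Claim~1 shows that an automorphism of $\A$ sending a point of $A_x$ to another point of $A_x$ must fix $x$, so $A_x$ has finitely many 1-orbits under $Aut(A_x)$. Claim~2 then inducts on $ht(A_x)$: it cuts $A_x$ at the height-one nodes $y$ into subtrees $A_y$, and classifies $p$-tuples by which $A_y$ each entry lies in, the isomorphism type of $A_y$, the label set $c(y)$, and the orbit of the subtuple inside $A_y$. Finally the trees are glued over the $m$-orbits of $X$.

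You replace all of this by a single flat invariant: the $X$-orbit of the root tuple, the labelled shape of the spanned subtree $P$ with the positions of the $a_i$, and the $Aut(\A)$-orbit of every node of $P$. You then realize tuples with equal invariants by root-to-leaf sibling swaps cut out of global automorphisms. This avoids the height induction and the passage to the partial algebras $A_x$. It also gives an explicit bound on the number of $m$-orbits in terms of the number of 1-orbits, the height bound, and the number of $m$-orbits of $X$.

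The step you flag as delicate closes easily, and it makes part of your refined data superfluous. For $u\notin X$, each $f_i(u)$ is either $u$ or the parent of $u$. Indeed, if $f_i(u)$ were a child $w$, the function word carrying $w$ to the root would have to pass through $u$, making $u$ properly cyclic off the core. So the parent of $u$ is the unique value $f_i(u)\neq u$, and every automorphism preserves it.

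Now suppose $h\in Aut(\A)$ sends a child $c$ of $p$ to another child $d$. Then automatically $h(p)=p$, and $h$ maps the branch $B(c)$ below $c$ onto $B(d)$. Define the map that is $h$ on $B(c)$, $h^{-1}$ on $B(d)$, and the identity elsewhere. This is an automorphism of $\A$: at $c$ you need $f_i(c)=p \iff f_i(d)=p$, which holds because $h(p)=p$.

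Thus ``same 1-orbit'' already yields ``same orbit under the stabilizer of the parent''. The extra stabilizer data, the notion of expandable nodes, and the worry about matching multiplicities of hanging subtrees are therefore all unnecessary.

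One correction: these swaps cannot be composed independently, since $\{c_1,\dots,c_r\}$ and $\{d_1,\dots,d_r\}$ may overlap. Apply them in sequence instead. At step $j$, swap the branch of the current image of $c_j$ with $B(d_j)$. This moves only points of two sibling branches, neither of which contains any of $d_1,\dots,d_{j-1}$ or any node placed at an earlier level.
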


\begin{proof}
 Suppose $\A$ is locally finite with finitely many 1-orbits and  $X$ has finitely many $m$-orbits under $\{g|_X\mid g\in Aut(\A)\}$ for any $m\in\mathbb{N}^+$,   we prove that $\A$ is $\omega$-categorical in several steps. 
    For any $x\in X$, we define $ht(a)= \text{min} \{n_1+...+n_k\mid f^{n_1}_{i_1}...f^{n_k}_{i_k}(a)= x \text{ for some } 1 \leq i_1,...,i_k\leq n$ and $n_1,...,n_k\in \mathbb{N}\}$ for any $a\in A_x$\footnote{Note by the definition of a tree-like unary algebra, we know that for any $a\in A$, there is a unique $x\in X$ such that $a\in A_x$. Thus we are justified to write $h(a)$ instead of $h_x(a)$. And $ht(a)$ is exactly the height of $a$ in the rooted tree $(A_x,R)$. }. As $\A$ is locally finite and has finitely many 1-orbits, there exists $m\in \mathbb{N}$ such that $ht(a)\leq m$ for any $a\in A_x$. Thus we can define $ht(A_x)= \text{max} \{ht(a)\mid a\in A_x\}$.
    As $(A_x, R)$ is a tree and there is no properly cyclic element in $A_x\setminus\{x\}$, for any $a\not=b\in A_x\setminus\{x\}$, if $f_i(a)=b$, then $ht(a)=ht(b)+1$. For each $x\in X$, we prove the following claims for $A_x$:

\textbf{Claim 1:} $A_x$ as a partial unary algebra is itself locally finite and has finitely many 1-orbits.

As $\A$ is locally finite, so is $A_x$. For any $a\not=b\in A_x$, suppose $g(a)=b$ for some $g\in Aut(\A)$. By definition, there exist  $1 \leq i_1,...,i_k\leq n$ and $n_1,...,n_k\in \mathbb{N}$ such that $f^{n_1}_{i_1}...f^{n_k}_{i_k}(a)= x $ where $n_1+...+n_k=ht(a)$. If $g(x)\not\in A_x$, then $g(f^{n_1}_{i_1}...f^{n_k}_{i_k}(a))\not\in A_x$. As $g\in Aut(\A)$, we get $f^{n_1}_{i_1}...f^{n_k}_{i_k}(g(a))=f^{n_1}_{i_1}...f^{n_k}_{i_k}(b)\not\in A_x$. As $b\in A_x$, by the closure property of $A_x$, we know that $f_{i_j}^m...f^{n_k}_{i_k}(b)=x$ for some $1\leq j\leq n$ and $m<n_j$. As $n_1+...+n_k=ht(a)$, $f_{i_j}^m...f^{n_k}_{i_k}(a)\in A_x\setminus \{x\}$. In particular, $g(f_{i_j}^m...f^{n_k}_{i_k}(a))=f_{i_j}^m...f^{n_k}_{i_k}(b)\in X$ while $f_{i_j}^m...f^{n_k}_{i_k}(a)\not\in X$. This is impossible as $X$ is invariant under $Aut(\A)$. Therefore, $g(x)\in A_x$. We know  $ht(A_x)=m$ for some $m\in \mathbb{N}$ and $x$ is the unique element in $A_x$ with the property that there exist $a_1,....,a_m\not \in X$ such that $f_{n_1}(a_1)=a_2$,..., $f_{n_{m-1}}(a_{m-1})=a_m$ $f_{n_m}(a_m)=x$ where $1\leq n_1,...,n_m\leq n$. Thus $g(x)=x$. Then it is easy to see that $g(A_x)=A_x$. Thus $g|_{A_x}\in Aut(A_x)$ ($A_x$ as a partial unary algebra). As $\A$ has finitely many 1-orbits, so does $A_x$. 

\textbf{Claim 2:} $A_x$ has finitely many $p$-orbits for any $p\in \mathbb{N}^+$.

We do induction on $ht(A_x)=k$. More precisely, we do induction on the height of locally finite partial unary algebras which are rooted trees with no properly cyclic elements. For $k=0$, $A_x=\{x\}$ and the claim is clearly true. For $k=h+1$, for any $y\in A_x$ with $ht(y)=1$, define $A_y=\{a\in A\mid f^{n_1}_{i_1}...f^{n_k}_{i_k}(a)= y \text{ for some } 1 \leq i_1,...,i_k\leq n$ where $n_1,...,n_k\in \mathbb{N}, \text{ none of } a, f_{i_k}(a),...,f^{n_k}_{i_k}(a),...,\\f^{n_2}_{i_2}...f^{n_k}_{i_k}(a), f_{i_1}f^{n_2}_{i_2}...f^{n_k}_{i_k}(a), ..., f^{n_1-1}_{i_1}...f^{n_k}_{i_k}(a) \text{ is in } X \}$. Note that $y\in A_y \subseteq A_x$ for any $y\in A_x$ with $ht(y)=1$, and $(A_y,R)$ is actually the largest subtree with $y$ as its root. Now for any $a\not=b\in A_y$ where $y\in A_x$ with $ht(y)=1$, if $g(a)=b$ for some $g\in Aut(A_x)$, as $g$ is an automorphism, we have $ht(a)=ht(b)$ and thus $g(y)=y$. Therefore, by definition, we see immediately $g(A_y)=A_y$, and thus $g|_{A_y}\in Aut(A_y)$ ($A_y$ as a partial unary algebra). As $Aut(A_x)$ has finitely many 1-orbits, we thus know that $Aut(A_y)$ has finitely many 1-orbits. As $ht(y)=1$ and $ht(A_x)=h+1$, we have $ht(A_y)\leq h$ and clearly $A_y$ is locally finite. By induction hypothesis, we know that $A_y$ has finitely many $p$-orbits for any $p$. For any $g\in Aut(A_x)$ and any $y\in A_x$ with $ht(y)=1$, we have $ht(g(y))=1$ and $g(A_y)=A_{g(y)}$). Therefore, as $A_x$ has only finitely many 1-orbits, there are only finitely many $A_y$'s with $y\in A_x$ and $ht(y)=1$ up-to-isomorphism. 

Now for any $p\in \mathbb{N}^+$, any $p$-tuple $(a_1,...,a_p)\in A_x\setminus \{x\}$, as $(A_x, R)$ is a rooted tree, there is a unique $p$-tuple $(A_{y_1},...,A_{y_p})$ such that $a_i\in A_{y_i}$ with $ht(y_i)=1$ for any $1\leq i\leq p$. For any $y\in A_x$ with $ht(y)=1$, we define $c(y)=\{1\leq i\leq n\mid f_i(y)=x\}$. Note for any $i\not\in c(y)$, $f_i(y)=y$. For any $y,y'\in A_x$ with $ht(y)=ht(y')=1$, if $A_y \cong A_y'$ and $c(y)=c(y')$, then there exists $g\in Aut(A_x)$ with $g(y)=y'$ and $g(A_y)=A_{y'}$ (taking an isomorphism exchanging $A_y$ and $A_{y'}$ and fixing the rest of $A_x$). Similarly for $(y_1,...y_j), (y'_1,...y'_j)$ with $ht(y_l)=ht(y'_l)=1$ for any $1\leq l\leq j$, if $y_i=y_r$ if and only if $y'_i=y'_r$, $c(y_l)=c(y'_l)$ and $A_{y_l}\cong A_{y'_l}$ for any $1\leq i,r,l\leq n$, then there exists $g\in Aut(A_x)$ with $g(y_l)=y'_l$ and $g(A_{y_l})=A_{y'_l}$ for any $1\leq l\leq j$.

Now we define the following equivalence relation $\sim$ on $p$-tuples of $A_x$ as follows: for any $(a_1,...,a_p),(b_1,...,b_p)\in A_x$, $(a_1,...,a_p)\sim (b_1,...,b_p)$ if and only if the following hold:

\begin{enumerate}
    \item[(1)] $a_i=x$ if and only if $b_i=x$
    \item[(2)] For $a_i\not=a_j\in A_x\setminus\{x\}$ and $b_i\not=b_j\in A_x\setminus\{x\}$, $y_i=y_j$ if and only if $y'_i=y'_j$ where $a_i\in A_{y_i}$, $a_j\in A_{y_j}$, $b_i\in A_{y'_i}$ and $b_j\in A_{y'_j}$ with $ht(y_i)=ht(y_j)=ht(y'_i)=ht(y'_j)=1$
    \item[(3)] For any $a_i\in A_{y_i}$ and $b_i\in A_{y'_i}$  with $ht(y_i)=ht(y'_i)=1$ where $1\leq i\leq n$, say $\Bar{a}$ is the longest subtuple of $(a_1,...,a_p)$ whose elements are all in $A_{y_i}$ and $\Bar{b}$ is the corresponding longest subtuple of $(b_1,...,b_p)$  whose elements are all in $A_{y'_i}$, then  $A_{y_i} \cong A_{y'_i}$, $c(y_i)=c(y'_i)$ and $\Bar{a}$, $\Bar{b}$ are in the same orbit if one identifies $A_{y_i}, A_{y'_i}$ by an isomorphism.    
\end{enumerate}

It is easy to check that if $(a_1,...,a_p)\sim (b_1,...,b_p)$, then there exists $g\in Aut(A_x)$ such that $g(a_i)=b_i$ for any $1\leq i\leq p$, namely $(a_1,...,a_p)$ and $(b_1,...,b_p)$ are in the same orbit. Since there are only finitely many $A_y$'s with $y\in A_x$ and $ht(y)=1$ up-to-isomorphism, and each such $A_y$ has finitely many $i$-orbits for any $i\in \mathbb{N}^+$,  the equivalence relation $\sim$ has finitely many equivalence classes. Thus we can conclude that $A_x$ has finitely many $p$-orbits for any $p\in\mathbb{N}$, and this proves Claim 2.

For any $g\in Aut(\A)$, we have $g|_X\in Aut(X)$ and $g(A_x)\cong A_{g(x)}$ for any $x\in X$ by the definition of $A_x$'s. As $\A$ has only finitely many 1-orbits, there are only finitely many $A_x$'s with $x\in X$ up-to-isomorphism. For any $(a_1,...,a_m)\in A$ where $m\in\mathbb{N}^+$, as $A$ is the disjoint union of $A_x$'s, there is a unique $m$-tuple $(x_1,...,x_m)$ where $x_i\in X$ and $a_i\in A_{x_i}$ for any $1\leq i\leq m$. For convenience, we refer to $(x_1,...,x_m)$ as the index of $(a_1,...,a_m)$. Now we define an equivalence relation on $m$-tuples of $\A$: for any $(a_1,...,a_m), (b_1,...,b_m)\in A$, $(a_1,...,a_m) \sim (b_1,...,b_m)$ holds if and only if the following hold:

\begin{enumerate}
    \item[(1)] The indices of $(a_1,...,a_m)$ and $(b_1,...,b_m)$, say $(x_1,...,x_m)$ and $(x'_1,...,x'_m)$ respectively,  lie in the same $m$-orbit of $X$ under $\{g|_X\mid g\in Aut(\A)\}$. Fix a witness $g\in Aut(\A)$ such that $g(x_1,...,x_m)=(x'_1,...,x'_m)$.
    \item [(2)] For any $1\leq i\leq m$, let $\Bar{a}_i$ be the longest subtuple of $(a_1,...,a_m)$ whose elements are all in $A_{x_i}$ and $\Bar{b}_i$ be the corresponding longest subtuple of $(b_1,...,b_m)$ whose elements are all in $A_{x'_i}$, then $g(\Bar{a}_i)$ and $\Bar{b}_i$ lie in the same orbit of $A_{x'_i}$.
\end{enumerate}

If $(a_1,...,a_m) \sim (b_1,...,b_m)$, then for any $1\leq i\leq m$, there exists $h_i\in Aut(\A)$ such that $h_i(g(\Bar{a}))=\Bar{b}$ where $h_i|_{A_{x'_i}}\in Aut (A_{x'_i})$ and $h_i$ fixes the rest of $A$ pointwise. One can then check that $h_1\circ...\circ h_m\circ g (a_1,...,a_m)=(b_1,...,b_m)$. Therefore, if  $(a_1,...,a_m) \sim (b_1,...,b_m)$, then they are in the same orbit in $\A$. As $X$ has finitely many $m$-orbits under $\{g|_X\mid g\in Aut(\A)\}$ and each $A_x$ has finitely many $m$-orbits, we know that $\sim$ has finitely many equivalence classes. Thus $\A$ has finitely many $m$-orbits for any $m\in\mathbb{N}^+$, and this proves that $Aut(\A)$ is oligomorphic.

\end{proof}

Now by Ryll-Nardzewski theorem and Proposition 3.2, the following result is immediate:

\begin{corollary}
    Let $\A=(A,f_1,...,f_n)$ be a countably infinite unary algebra, if $\A$ is locally finite with finitely many 1-orbits such that each connected component is a tree-like unary algebra with a finite core, then $\A$ is $\omega$-categorical.
\end{corollary}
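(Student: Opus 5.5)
The plan is to combine Proposition 3.2 with Theorem 3.6, applied to each connected component, and then finish with the Ryll-Nardzewski theorem (Theorem 1.1). By Proposition 3.2, it suffices to show two things. First, $\A$ has only finitely many connected components up to isomorphism. Second, every infinite connected component $\mathfrak{C}$ is $\omega$-categorical.

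For the first point, I will use that automorphisms of $\A$ map connected components onto connected components. Hence the map sending $a$ to the component containing $a$ is constant on $1$-orbits up to isomorphism of components. Since every component is nonempty and $\A$ has finitely many $1$-orbits, there are finitely many components up to isomorphism.

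For the second point, fix an infinite component $\mathfrak{C}$. It is locally finite, because its finitely generated subalgebras are subalgebras of $\A$. By hypothesis it is tree-like with a finite core $X$. To apply the ``in particular'' clause of Theorem 3.6, I only need $\mathfrak{C}$ to have finitely many $1$-orbits under $Aut(\mathfrak{C})$.

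For this step I will take $a,b\in C$ lying in the same $Aut(\A)$-orbit via some $g$. Then $g$ maps $\mathfrak{C}$ onto the component of $b$, which is $\mathfrak{C}$ itself, so $g|_C\in Aut(\mathfrak{C})$. Thus every $Aut(\A)$-orbit meeting $C$ is contained in a single $Aut(\mathfrak{C})$-orbit intersected with $C$. I must be careful with the direction here: what I have shown is that the partition of $C$ into $Aut(\mathfrak{C})$-orbits is coarser than its partition into traces of $Aut(\A)$-orbits. There are finitely many such traces, so there are finitely many $Aut(\mathfrak{C})$-orbits.

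Theorem 3.6 then gives that $Aut(\mathfrak{C})$ is oligomorphic. Since $\mathfrak{C}$ is infinite, and countable as a subset of the countable set $A$, Theorem 1.1 shows that $\mathfrak{C}$ is $\omega$-categorical. Proposition 3.2 now yields that $\A$ is $\omega$-categorical.

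The main obstacle is the passage from the $1$-orbit hypothesis on $\A$ to the corresponding hypothesis on each component, in particular the direction of the restriction argument just described. I expect it to go through as stated, because connected components are preserved setwise by any automorphism that maps a point of the component into the same component. Everything else is a direct citation of earlier results.
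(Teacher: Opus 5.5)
Your proposal is correct and is essentially the paper's argument: the paper simply declares the corollary immediate from Theorem 3.6 (the finite-core case), the Ryll-Nardzewski theorem and Proposition 3.2, which is exactly the chain you use. What you add is an explicit check of two routine points the paper leaves implicit: that there are finitely many components up to isomorphism, and that each component inherits finitely many $1$-orbits, because automorphisms of $\mathfrak{A}$ carry components onto components and restrict to automorphisms of any component they map into itself.
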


For mono-unary algebras, we can deduce Theorem (A) as a simple corollary:

\begin{corollary}
If a countably infinite mono-unary algebra $\A$ is locally finite with finitely many 1-orbits, then it is $\omega$-categorical.
\end{corollary}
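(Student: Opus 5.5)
The plan is to reduce to Corollary 3.7: show that every connected component of a locally finite mono-unary algebra $\A=(A,f)$ with finitely many 1-orbits is a tree-like unary algebra with a finite core. Corollary 3.7 then gives $\omega$-categoricity directly.

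Fix a connected component $C$. Since $\A$ is locally finite, $\langle a\rangle=\{a,f(a),f^2(a),\dots\}$ is finite for each $a$. Hence the forward orbit of $a$ is eventually periodic, and $f^k(a)$ is cyclic for some $k$. Any two cyclic elements of $C$ lie on the same cycle. Indeed, connectivity gives a zigzag path between them, and the $f$-iterates of adjacent points eventually coincide, because $f(u)=v$ forces the forward orbits of $u$ and $v$ to merge. So the eventual cycles of any two points of $C$ coincide, and a cyclic point is recovered from its own forward orbit. Thus $C$ contains exactly one cycle $X$, which is finite.

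Next I check that $X$ is a core of $C$. It is closed under $f$, so it is a subalgebra. It is the set of cyclic elements, so it is invariant under $Aut(C)$. In the mono-unary case properly cyclic elements are cyclic, so $X$ contains all properly cyclic elements. When the cycle has length 1, the single point $x$ with $f(x)=x$ is cyclic but not properly cyclic. Taking $X=\{x\}$ is still fine, since it is the unique fixed point of $C$ and hence invariant.

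Finally I verify the tree-like conditions. For $a\in C\setminus X$, let $k\geq 1$ be least with $f^k(a)\in X$, and put $a\in A_{f^k(a)}$. Because the only word is a power of $f$, the path $a,f(a),\dots,f^{k-1}(a)$ avoiding $X$ is forced, so the root is unique. This makes $C=\bigsqcup_{x\in X}A_x$. For each $x$, every non-root vertex of $A_x$ has exactly one edge to a point of smaller height, namely to $f(a)$. There are no other cycles, since a cycle in the undirected graph on $A_x$ would force a second cyclic element outside $X$. Hence $(A_x,R)$ is a rooted tree with root $x$.

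The main obstacle is this undirected-graph bookkeeping: confirming that the $R$-graph on $A_x$ has no cycles and that the sets $A_x$ are pairwise disjoint. I would handle it by noting that each non-root vertex has a unique "parent" $f(a)$ of height one less, so the edge count equals the vertex count minus one on every finite subtree. With these conditions verified, Corollary 3.7 applies and gives the result.
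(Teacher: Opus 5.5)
Your proposal is correct and takes the same route as the paper: reduce to Corollary 3.7 by showing that each connected component is tree-like, with its unique finite cycle (the set of cyclic elements) as the core. You also supply the details the paper dismisses as ``one can see immediately'': uniqueness of the cycle via merging forward orbits, disjointness of the $A_x$, and the parent-edge argument that $(A_x,R)$ is a rooted tree.
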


\begin{proof}
As $\A$ has finitely many 1-orbits, it has finitely many connected components up-to-isomorphism. For each connected component, as $\A$ is locally finite, the set of all cyclic elements forms a nonempty finite subalgebra. Now one can see immediately that each connected component is a tree-like mono-unary algebra with the set of its cyclic elements as the core. Hence by Cor 3.7, we get that $\A$ is $\omega$-categorical.
\end{proof}

Instead of using the corresponding analogue of \cite[Cor 3.11]{q} for unary algebras, which cannot be true in general as mentioned, we do an explicit counting argument here. This is the key difference between the above proof of Theorem (A) and the one in \cite{q}.\footnote{One may wonder if the corresponding analogue of \cite[Cor 3.11]{q} is at least true for tree-like unary algebras. Again the answer is negative and one can see this by the counterexample in Section 2: every connected component is itself a finite core and thus a tree-like unary algebra with a finite core.}

Besides, it is easy to see that our proof of Claim 2 in Theorem 3.6 also gives us the following simple result about edge-colored directed trees with finite height\footnote{For $\omega$-categorical vertex-colored trees, one can consult \cite{3}.}:

\begin{proposition}
If $\A=(A,R, R_1,...,R_n)$ is an edge-colored directed tree, i.e., $(A,R)$ is a directed tree where $R=\bigsqcup_{1\leq i\leq n} R_i$, then if $\A$ is of finite height with finitely many 1-orbits, then $Aut(\A)$ is oligomorphic.
\end{proposition}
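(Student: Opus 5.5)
Claim 2 in the proof of Theorem 3.6 already carries the argument, so I would adapt it to $\A=(A,R,R_1,\dots,R_n)$ and induct on the height of the tree. Fix a root $r$. For $a\in A$, let $ht(a)$ be the length of the unique directed path between $a$ and $r$. An automorphism preserves $R$, so it preserves in-degree zero (or out-degree zero, depending on how edges are oriented). Hence it fixes $r$ and preserves $ht$. As in Claim 2, for each $y$ with $ht(y)=1$ let $A_y$ be the subtree hanging at $y$, with the induced edge-colored structure. For the same reason as in Claim 2, any $g\in Aut(\A)$ satisfies $g(A_y)=A_{g(y)}$. I would set $c(y)$ to be the unique $i$ with $(y,r)\in R_i$ (or $(r,y)\in R_i$). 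Since the $R_i$ partition $R$, this plays the role that $c(y)$ played in Claim 2.

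The induction runs as follows.
\begin{enumerate}
\item[(i)] Height $0$ is trivial.
\item[(ii)] If $g\in Aut(\A)$ maps a point of $A_y$ into $A_y$, then $g(y)=y$. So the restrictions of the setwise stabiliser of $A_y$ give automorphisms of $A_y$, and every 1-orbit of $A_y$ is contained in a 1-orbit of $\A$. Distinct 1-orbits of $A_y$ could merge in $\A$ only via automorphisms that do not stabilise $A_y$, and those move $A_y$ to a different subtree. I expect this to be the main obstacle, but it dissolves as follows. If $g(a)=b$ with $a,b\in A_y$, then $g(y)=y$, so $g|_{A_y}\in Aut(A_y)$. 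Hence $A_y$ has finitely many 1-orbits, height strictly smaller than that of $\A$, and by induction $Aut(A_y)$ is oligomorphic.
\item[(iii)] Because $\A$ has finitely many 1-orbits, only finitely many isomorphism types of pairs $(A_y,c(y))$ occur.
\item[(iv)] Given two families of distinct height-one vertices $y_1,\dots,y_j$ and $y'_1,\dots,y'_j$ with $c(y_l)=c(y'_l)$ and $A_{y_l}\cong A_{y'_l}$, swap the corresponding subtrees and fix everything else. This gives an automorphism of $\A$, since the edges between $r$ and the $y$'s keep their colours. Composing with automorphisms of the individual $A_{y'_l}$ then realises any prescribed combination of orbits inside the pieces.
\item[(v)] Define $\sim$ on $p$-tuples exactly as in Claim 2: record which coordinates equal $r$, which coordinates share a subtree, the type $(A_y\cong\cdot,\,c(y))$ of each subtree involved, and the $Aut(A_y)$-orbit of the subtuple in each subtree. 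By (iv), $\sim$-equivalent tuples lie in the same orbit.
\item[(vi)] The data in (v) ranges over a finite set by (ii) and (iii). So there are finitely many $p$-orbits for each $p$, and $Aut(\A)$ is oligomorphic.
\end{enumerate}

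If the tree instead has a unique sink or source, and that is the intended notion of a directed tree, the same argument applies with the direction of the edges reversed. For finite height but without a unique root, I would either work with the forest of components or reduce to the rooted case by treating components exactly as in Proposition 3.2. In the latter reduction, finitely many 1-orbits gives finitely many component types, and oligomorphicity is assembled across components as in the last step of Theorem 3.6, with $X$ the finite-orbit set of roots.
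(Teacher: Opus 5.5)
Your proposal is correct and is essentially the paper's own argument. The paper proves this proposition only by remarking that the height induction of Claim 2 in Theorem 3.6 carries over directly: height-one subtrees $A_y$, the edge data $c(y)$, finitely many isomorphism types of subtrees, and the equivalence relation $\sim$ on tuples. That is exactly what you carry out, and your remarks on edge orientation and on the forest case are harmless additions.
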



\begin{thebibliography}{30}

\bibitem{q}
T. Quinn-Gregson, ``Ultrahomogeneity and $\omega$-categoricity of mono-unary algebras", \textit{Preprint}, arxiv.org/abs/2603.26616.


\bibitem{2}
K. Tent and M. Ziegler, \textit{A Course in Model Theory}, Cambridge University Press, 2012.


\bibitem{3}

R. Barham, ``The $\omega$-categorical trees", \textit{Order} \textbf{34} (2017), 127-138.

\bibitem{S}

Y.E. Shishmarev, ``Categorical theories of a function", \textit{Mathematical Notes of the Academy of Sciences of the USSR} \textbf{11} (1972), 58-63.

\bibitem{I}
A.A. Ivanov, ``Complete theories of unars", \textit{Sib Math J} \textbf{27} (1986), 45-55.


\end{thebibliography}
\end{document}